\newtheorem{theorem}{Theorem}[section]
\newtheorem{lemma}[theorem]{Lemma}
\newtheorem{proposition}[theorem]{Proposition}
\begin{document}

\title[Unimodality of the independence polynomials of non-regular caterpillars]{Unimodality of the independence polynomials of non-regular caterpillars}

\author[Patrick Bahls]{Patrick Bahls}

\author[Bailey Ethridge]{Bailey Ethridge}

\author[Levente Szabo]{Levente Szabo}

\address{Department of Mathematics \\ University of North Carolina \\ Asheville, NC 28804, USA}

\email{pbahls@unca.edu}

\email{bethridg@unca.edu}

\email{lszabo@unca.edu}

\keywords{independence polynomial, tree, unimodal}

\subjclass[2010]{05C31,05C69}

\begin{abstract}
The \textit{independence polynomial} $I(G,x)$ of a graph $G$ is the polynomial in variable $x$ in which the coefficient $a_n$ on $x^n$ gives the number of independent subsets $S \subseteq V(G)$ of vertices of $G$ such that $|S|=n$. $I(G,x)$ is \textit{unimodal} if there is an index $\mu$ such that $a_0 \leq a_1 \leq \cdots \leq a_{\mu-1} \leq a_{\mu} \geq a_{\mu+1} \geq \cdots \geq a_{d-1} \geq a_d$. While the independence polynomials of many families of graphs with highly regular structure are known to be unimodal, little is known about less regularly-structured graphs. We analyze the independence polynomials of a large infinite family of trees without regular structure and show that these polynomials are unimodal through a combinatorial analysis of the polynomials' coefficients.
\end{abstract}

\maketitle

\section{Introduction} \label{secIntroduction}

\noindent Let $G=(V,E)$ be a graph with vertices $V$ and edges $E$. An \textit{independent set} in $G$ is a subset of $V$ in which no two distinct vertices are adjacent. (In other words, the shortest path between each pair of vertices in an independent set is at least length two.)  The \textit{independence number} of $G$, denoted $\alpha(G)$, is the cardinality of a largest independent set in $V$.  The \textit{independence polynomial}, $I(G;x)$, of $G$ is defined by \[ I(G;x) = \sum_{n=0}^{\alpha(G)} a_nx^n, \] \noindent where $a_n$ is the number of independent subsets of $V$ of cardinality $n$.

We say that a sequence $\{a_0,a_1,...,a_d\}$ is \textit{unimodal} if there is some index $\mu$ such that $a_0 \leq a_1 \leq \cdots a_{\mu-1} \leq a_{\mu} \geq a_{\mu+1} \geq \cdots \geq a_{d-1} \geq a_d$; in this case we say that $\mu$ is a \textit{mode} of the sequence. A polynomial is said to be unimodal if its coefficient sequence is unimodal. Throughout this article, we will abuse the terminology and say that a graph is unimodal if its independence polynomial is unimodal. We will further say that a sequence $\{a_0,a_1,...,a_d\}$ (or, analogously, polynomial or graph) is \textit{strictly} unimodal if it has a unique mode $\mu$ and if \[ a_0 < a_1 < \cdots < a_{\mu-1} < a_{\mu} > a_{\mu+1} > \cdots > a_{d-1} > a_d. \]

The unimodality of various families of graphs has been the focus of a large amount of study. The survey \cite{LevitMandrescu2} provides an overview of a number of early results concerning unimodality and related properties (\textit{e.g.}, symmetry and logarithmic concavity). More recent studies include [2--7,12--15,17--19]. Despite the considerable attention paid to unimodality, little is known about the unimodality of any but the most regular families of graphs. In particular, little progress has been made in answering the following simple question, first posed by Alavi, Malde, Schwenk, and Erd\H{o}s in \cite{AlaviMaldeSchwenkErdos}: is the independence polynomial of an arbitrary tree unimodal?

A graph $T$ is called a \textit{tree} if it is connected and acyclic. Further, a tree $T$ is called a \textit{caterpillar} if the collection of all of $T$'s vertices of degree at least $2$ forms a path $P$, which we call the \textit{spine} of the caterpillar. Several authors have investigated the independence polynomials of ``regular'' caterpillars, in which the subtree of $T$ pendant at each vertex of $T$'s spine is identical. Beginning with \cite{ZhuZF}, Zhu proves that caterpillars in which every spine vertex has two pendant edges is unimodal. In fact, several studies of regular caterpillars encompass more general path-like graphs; see, for example, \cite{Bahls} and \cite{BahlsSalazar}. More recently, Galvin and Hilyard \cite{GalvinHilyard} investigate the behavior of $I(T,x)$ for a family of ``semi-regular'' caterpillar-like graphs, in which the subtrees pendant at the vertices in the graph's spine are not all identical to one another but rather alternate in a regular pattern.

Here we consider much more general caterpillars. Let $\vec{m} = (m_1,m_2,...)$ be a sequence of natural numbers and let $T(\vec{m},n)$ be the caterpillar with $n$ vertices on its spine, the $k$th of which has $m_k$ pendant edges.

\begin{theorem} \label{mainTheorem}
Let $T(\vec{m},n)$ be defined as above for $\vec{m} = (m_1,m_2,...)$ such that
\begin{enumerate}
\item $m_k \leq m_{k+1}$ for all $1 \leq k < n$,
\item $3 \leq m_1 < m_2$ and $m_3 < m_4$, and
\item $2(m_1+m_3+\cdots+m_k) < 3(m_2+m_4+\cdots+m_{k-1})$ for $k$ odd and $2(m_2+m_4+\cdots+m_k) < 3(m_1+m_3+\cdots+m_{k-1})$ for $k$ even.
\end{enumerate}
\noindent Then $I(T(\vec{m},n),x)$ is unimodal with mode $\mu_n \in \left\{ \lfloor \frac{d_n}{2} \rfloor, \lceil \frac{d_n}{2} \rceil \right\}$, where \[ d_n = \deg(I(T(\vec{m},n))) = \sum_{k=1}^n m_k. \]
\end{theorem}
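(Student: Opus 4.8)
The plan is to reduce everything to a recursion in $n$ and then propagate unimodality by a carefully strengthened induction. Writing $I_k := I(T(\vec m,k),x)$ and conditioning on the status of the last spine vertex $v_n$, I would first split $I_n = A_n + B_n$, where $A_n$ collects the independent sets avoiding $v_n$ and $B_n$ those containing $v_n$. If $v_n$ is excluded, its $m_n$ pendant leaves may be chosen freely and $v_{n-1}$ is unconstrained, giving $A_n = (1+x)^{m_n} I_{n-1}$; if $v_n$ is included, it contributes a factor $x$, its leaves are forbidden, and $v_{n-1}$ must be excluded, giving $B_n = x A_{n-1} = x(1+x)^{m_{n-1}} I_{n-2}$. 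Hence
\[ I_n = (1+x)^{m_n} I_{n-1} + x(1+x)^{m_{n-1}} I_{n-2}, \]
with base cases $I_1 = (1+x)^{m_1} + x$ and $I_2$ computed directly. Since all leaves of $T(\vec m,k)$ form an independent set, every coefficient of $I_k$ is strictly positive up to degree $d_k$; combined with the standard fact that convolving a positive log-concave sequence with no internal zeros by the (log-concave, symmetric) binomial sequence of $(1+x)^m$ again yields a log-concave sequence, this shows that each \emph{individual} summand on the right-hand side is unimodal.

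The difficulty is that the sum of two unimodal polynomials need not be unimodal, so the argument rests on controlling the relative location and magnitude of the two summands. I would run strong induction on $n$ under a hypothesis stronger than bare unimodality: that $I_{n-1}$ and $I_{n-2}$ are strictly unimodal with mode in $\{\lfloor d/2\rfloor,\lceil d/2\rceil\}$ for the appropriate degree $d$, and, crucially, that their difference sequences $a_{k+1}-a_k$ change sign exactly once. Tracking centers, the first summand $(1+x)^{m_n}I_{n-1}$ carries its mass near $d_n/2$ (convolution by $(1+x)^{m_n}$ shifts the center of $I_{n-1}$ by $m_n/2$), while the second summand $x(1+x)^{m_{n-1}}I_{n-2}$ is centered near $d_n/2-m_n/2+1$, to the left. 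It is illuminating to record the closed form $I_n=\sum_{A}x^{|A|}(1+x)^{d_n-m(A)}$, the sum over independent sets $A$ of the spine with $m(A)=\sum_{k\in A}m_k$: the term $A=\emptyset$ is the symmetric polynomial $(1+x)^{d_n}$ of maximal width, anchoring the mode at the middle, while every other term is a strictly narrower, leftward-shifted binomial bump.

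The main obstacle, then, is to prove that the accumulated leftward bumps never lift the combined coefficient sequence into a second local maximum. This is exactly what hypotheses (1)--(3) are engineered to guarantee: monotonicity (1) and the base strict inequalities (2) fix the shape at the start of the induction, while the weighted partial-sum inequalities (3), comparing odd- and even-indexed leaf totals with ratio $2$ versus $3$, bound both how much total mass the displaced bumps can carry and how far left they can sit relative to the central term. Quantitatively, I would compare the central binomial coefficients governing the first summand against the sum of the displaced coefficients of the second summand in the critical range around $d_n/2$, and show under (3) that the former dominates, so that $a_{k+1}^{(n)}-a_k^{(n)}$ inherits the single-sign-change property. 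I expect this coefficient comparison---rather than any soft unimodality-preservation lemma---to be the technical heart of the proof. Finally, the near-symmetry of the dominant term together with the established single sign change pins the mode of $I_n$ to $\{\lfloor d_n/2\rfloor,\lceil d_n/2\rceil\}$, completing the inductive step after direct verification of the small base cases, which---because (2) constrains $m_1,\dots,m_4$---are checked through $n=4$.
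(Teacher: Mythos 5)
Your reduction to the recursion $I_n = (1+x)^{m_n}I_{n-1} + x(1+x)^{m_{n-1}}I_{n-2}$ is exactly the paper's starting point (its Proposition 3.2), and you correctly diagnose the crux: a sum of two unimodal polynomials need not be unimodal, so the two summands must be played off against each other near the mode. But the proposal stops where the real proof begins, and the invariant you propose to carry through the induction is too weak to close it. Your strengthened hypothesis --- strict unimodality, balanced mode, and a single sign change in the difference sequence --- is redundant (the single sign change \emph{is} strict unimodality) and gives no quantitative lower bound on how fast the coefficients of the first summand are still climbing in the critical window between the old mode and the new one; that lower bound is precisely what must absorb the decrease contributed by the second summand. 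The paper closes this gap with two devices absent from your sketch. First, it factors out the full power of $1+x$, writing $p_n=(1+x)^{k_n}q_n$, so the recursion becomes $q_n=(1+x)^tq_{n-1}+xq_{n-2}$: the perturbing term is now a bare shift of $q_{n-2}$, not a binomially spread one, which is what makes a coefficientwise comparison tractable. Second, and crucially, the induction invariant is not balanced strict unimodality but left- or right-\emph{dominance}: the interleaved ordering $b_\mu > b_{\mu-1} > b_{\mu+1} > b_{\mu-2} > b_{\mu+2} > \cdots$ of coefficients around the mode. That ordering is what makes every paired term nonnegative in the expansions of $\beta_{k+1}-\beta_k$, yielding the estimates $\beta_{k+1}-\beta_k \geq (t-1)(b_k-b_{k+1})$ and, one level down, $b_k-b_{k+1} \geq (t'-1)(a_{k-1}-a_k)$, whence $\beta_{k+1}-\beta_k \geq a_{k-1}-a_k$ because $t,t'\geq 2$ (this is where hypothesis (2) is used). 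Nothing in your proposed comparison of ``central binomial coefficients'' against ``displaced bumps'' supplies a substitute for these pairing arguments, and with only strict unimodality of $I_{n-1}$ in hand they are not available.

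Two smaller points. Your suggested role for hypothesis (3) --- bounding the total mass and displacement of the leftward bumps in the closed form $I_n=\sum_A x^{|A|}(1+x)^{d_n-m(A)}$ --- is not how that hypothesis functions; in the paper, (3) is exactly the condition guaranteeing $t=k_{n+1}-k_n\leq\mu(q_{n-1})$, i.e.\ that the key lemma on multiplication by $(1+x)^t$ applies at every inductive step. Also, your argument that each individual summand is unimodal is flawed as stated: you invoke closure of log-concavity under convolution, but $I_{n-1}$ is never shown, nor assumed in your induction, to be log-concave. (The conclusion is still true, more simply, because multiplication by $1+x$ preserves unimodality, but that soft fact contributes essentially nothing toward the theorem.) In short, the proposal reproduces the paper's setup and correctly names the obstacle, but the technical heart --- the dominance ordering, its preservation under multiplication by powers of $1+x$, and the resulting difference estimates --- is missing, and the induction as you have framed it cannot be completed.
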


Condition~(3) is a technical one that follows if $m_k$ does not grow too quickly. For instance, when $\vec{m}$ is non-decreasing, Condition~(3) holds whenever $m_k \leq m_1+m_3+\cdots+m_{k-1}$ when $k$ is even and $m_k \leq m_2+m_4+\cdots+m_{k-1}$ when $k$ is odd.

In the following section, we establish several important lemmas and in Section~\ref{secProofMain} we prove Theorem~\ref{mainTheorem}.

\section{The relationship between $q(x)$ and $(1+x)^t q(x)$} \label{secLemmas}

\noindent Our proofs depend on careful analysis of products of the form $(1+x)^t q(x)$ where $q$ is a polynomial whose coefficients are well understood. We will say that the strictly unimodal polynomial $q(x) = \sum_{i=0}^d b_ix^i$ with unique mode $\mu$ is \textit{left-dominant} (abbreviated LD) if \[ b_{\mu} > b_{\mu-1} > b_{\mu+1} > \cdots > b_1 > b_{2\mu-1} > b_0 \geq b_{2\mu} \] \noindent when $\mu \leq \frac{d}{2}$ and \[ b_{\mu} > b_{\mu-1} > b_{\mu+1} > \cdots > b_{2\mu-d+1} > b_{d-1} > b_{2\mu-d} \geq b_d \] \noindent when $\mu > \frac{d}{2}$. Similarly, we will say that $q$ is \textit{right-dominant} (abbreviated RD) if \[ b_{\mu} > b_{\mu+1} > b_{\mu-1} > \cdots > b_{2\mu-1} > b_1 > b_{2\mu} \geq b_0 \] \noindent when $\mu \leq \frac{d}{2}$ and \[ b_{\mu} > b_{\mu+1} > b_{\mu-1} > \cdots > b_{d-1} > b_{2\mu-d+1} > b_d \geq  b_{2\mu-d} \] \noindent when $\mu > \frac{d}{2}$. We will say that $q$ is \textit{weakly} LD (or \textit{weakly} RD) if the respective ordering permits equality to hold between terms strictly less than $b_{\mu}$, rather than requiring strict inequality. Finally, assuming that $q$ is both strictly unimodal and either LD or RD, we will say that $q$ is \textit{balanced} if $\mu = \lceil \frac{d}{2} \rceil$ when $q$ is LD and $\mu = \lfloor \frac{d}{2} \rfloor$ when $q$ is RD.

We now examine how strict unimodality and left- and right-dominance are affected when multiplying a polynomial $q$ by powers of $1+x$:

\begin{lemma} \label{alternatingSUmodesEvenBalanced}
Suppose that $q(x) = \sum_{i=0}^d b_ix^i$ is a balanced strictly unimodal polynomial with mode $\mu$. Let $t \in \mathbb{N}$ such that $t \leq \mu$, and suppose $(1+x)^t q(x) = \sum_{i=0}^{d+t} \beta_ix^i$.
\begin{enumerate}
\item If $t$ is even and $q$ is weakly LD (respectively, weakly RD), then $(1+x)^t q(x)$ is a balanced weakly LD (respectively, weakly RD) strictly unimodal polynomial with unique mode $\mu+\frac{t}{2}$.
\item If $t$ is odd and $q$ is weakly LD (respectively, weakly RD), then $(1+x)^t q(x)$ is a balanced weakly RD (respectively, weakly LD) strictly unimodal polynomial with unique mode $\mu+\frac{t-1}{2}$ (respectively, $\mu+\frac{t+1}{2}$).
\end{enumerate}

\noindent Moreover, if $q$ is LD or RD (and not merely weakly LD or weakly RD), then $(1+x)^t q(x)$ is LD or RD, accordingly as above.
\end{lemma}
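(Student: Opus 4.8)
The plan is to reduce everything to a single multiplication by $1+x$ and then iterate. First I would record the reflection symmetry: writing $q^{*}(x) = x^{d}q(1/x)$ for the reversal of $q$, one has $b^{*}_{i} = b_{d-i}$, so reversal interchanges LD with RD and sends the mode $\mu$ to $d-\mu$; moreover, since $1+x$ is palindromic, $((1+x)q)^{*} = (1+x)q^{*}$. Consequently every assertion about a weakly RD input is the mirror image of the corresponding assertion about a weakly LD input, and it suffices to treat the weakly LD case throughout. Likewise, I would reduce the general exponent $t$ to the case $t=1$ by induction: two successive multiplications by $1+x$ carry weakly LD to weakly RD and back to weakly LD, advancing the mode by exactly $1$, which yields part~(1) (mode shift $\tfrac{t}{2}$) after $\tfrac{t}{2}$ pairs of steps, while an odd number of steps leaves one unpaired flip, giving part~(2). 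The bookkeeping that ``balanced'' is preserved reduces to the identities $\lceil d/2\rceil = \lfloor (d+1)/2\rfloor$ and $\lfloor d/2\rfloor + 1 = \lceil (d+1)/2\rceil$, and the hypothesis $t\le\mu$ guarantees that at each intermediate stage the (growing) mode never collides with the degree bounds, so that the base step always applies.

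The heart of the argument is therefore the base case $t=1$: if $q$ is balanced weakly LD, then $p(x) = (1+x)q(x) = \sum \beta_{i}x^{i}$, with $\beta_{i} = b_{i} + b_{i-1}$, is balanced weakly RD with the asserted mode. I would first dispatch strict unimodality of $p$, which is automatic from that of $q$ because $\beta_{i+1} - \beta_{i} = b_{i+1} - b_{i-1}$, and $b_{i+1} > b_{i-1}$ on the increasing side while $b_{i+1} < b_{i-1}$ on the decreasing side; this also pins the mode of $p$. For the right-dominance of $p$ I would verify the RD chain comparison by comparison, rewriting each required inequality between two $\beta$'s as a sum of two dominance inequalities for $q$: a comparison of $\beta_{\mu+k}$ with $\beta_{\mu-k}$ becomes $(b_{\mu+k} - b_{\mu-k-1}) + (b_{\mu+k-1} - b_{\mu-k})$, and a comparison of $\beta_{\mu-k}$ with $\beta_{\mu+k+1}$ becomes $(b_{\mu-k} - b_{\mu+k}) + (b_{\mu-k-1} - b_{\mu+k+1})$. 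In each case the two parenthesized terms are nonnegative by the (weak) LD order of $q$, so their sum has the right sign; when $q$ is strictly LD each term is strictly positive, which is precisely what gives the final ``moreover'' statement that strictness is preserved. The shift of the pairing by one index is exactly what converts left-dominance into right-dominance.

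The step I expect to be the main obstacle is the treatment of the extreme coefficients, i.e.\ the comparison at the far end of the RD chain involving $\beta_{0}$ and the topmost coefficients. When $d$ is even the symmetric core exhausts all indices and the pairing argument closes cleanly; but when $d$ is odd there is a single unmatched extreme coefficient (namely $b_{0}$ in the LD case), and the clean ``sum of two dominance differences'' rewriting is unavailable for the inequality that involves it. Here I would instead combine strict unimodality with the fact that in the balanced ordering this unmatched coefficient is the global minimum, so that $\beta_{0} = b_{0}$ is dominated by the relevant top coefficient of $p$. Getting the parity cases and this single boundary comparison exactly right --- and checking that the permitted equalities in the weak setting occur only strictly below the peak, so that weak dominance maps to weak dominance and strict to strict --- is the delicate part; the rest is a routine, if lengthy, verification driven by the two identities above.
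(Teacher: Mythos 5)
Your reduction to a single multiplication by $1+x$ is a genuinely different strategy from the paper's (the paper expands $(1+x)^t q$ directly and analyzes binomially weighted differences for general $t$), but it contains a fatal gap, and the gap sits exactly where you declare the argument ``automatic,'' not at the tail comparisons you single out as the delicate part. In your base case you claim strict unimodality of $p=(1+x)q$ follows from that of $q$ via $\beta_{i+1}-\beta_i = b_{i+1}-b_{i-1}$. At the junction $i=\mu$, however, the indices $\mu+1$ and $\mu-1$ straddle the peak, so unimodality of $q$ says nothing about their relative order; that comparison is a \emph{dominance} comparison, and weak left-dominance yields only $b_{\mu-1}\ge b_{\mu+1}$, hence only $\beta_\mu \ge \beta_{\mu+1}$. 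Equality genuinely occurs: $q(x)=1+2x+x^2$ is balanced, strictly unimodal, and weakly LD (indeed LD under the paper's definition, since the terminal comparison $b_0\ge b_{2\mu}$ permits equality), and it satisfies $t=1\le\mu=1$, yet $(1+x)q=(1+x)^3=1+3x+3x^2+x^3$ has two modes. So your $t=1$ step fails, and your claimed RD chain for $p$ breaks at its very first link, $\beta_\mu>\beta_{\mu+1}$. (The same example shows that part (2) of the lemma, which the paper waves off as ``analogous,'' is itself not correct as stated in the weak setting; but your architecture inherits this defect in an essential way rather than routing around it.)

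The structural consequence is that your iteration cannot deliver part (1) either, even though the even-$t$ statement is true and is the case the paper proves in detail. After an odd number of single multiplications the intermediate polynomial can carry a doubled mode, so it no longer satisfies the strict-unimodality hypothesis your next step requires, and the induction halts; the true even-$t$ claim cannot be factored through the false odd-$t$ claim. This is precisely why the paper handles all of $(1+x)^t$ at once: in its expansion of $\beta_{\mu+t/2}-\beta_{\mu+t/2\pm1}$ the pairing of terms always produces the summand $\left( \binom{t}{t/2}-\binom{t}{t/2-1} \right)\left( b_\mu - b_{\mu\mp1} \right)$, and $b_\mu>b_{\mu\pm1}$ remains \emph{strict} even under weak dominance; that reservoir of strictness at the peak is exactly what a one-factor-at-a-time argument throws away. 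Your reflection-symmetry reduction of RD to LD and your parity bookkeeping are fine, and your plan does succeed when $q$ is strictly dominant with $b_{\mu-1}>b_{\mu+1}$ (which is what gives the ``moreover'' clause), but as a proof of the lemma as stated, whose hypotheses include weakly dominant inputs, it does not go through.
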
 

\begin{proof}
Let us assume that $t$ is even and $q$ is weakly LD; the remaining three cases are proven analogously.

Since \[ \beta_k = \sum_{i=0}^k {t \choose i} b_{k-i}\] \noindent when $0 \leq k \leq t$ and \[ \beta_{d+k} = \sum_{i=k}^t {t \choose i} b_{d+k-i} \] \noindent when $d \leq k \leq d+t$, the strict unimodality of $q$ shows that $\beta_0 \leq \beta_1 \leq \cdots \leq \beta_t$ and $\beta_d \geq \beta_{d+1} \geq \cdots \geq \beta_{d+t}$. When $t \leq k \leq d$, \[ \beta_k = \sum_{i=0}^t {t \choose i} b_{k-i}, \] \noindent and straightforward computation thus yields \[ \beta_{\mu+t/2}-\beta_{\mu+t/2-1} = \left[ \displaystyle\sum_{i=0}^{t/2-1} \left( {t \choose i+1} - {t \choose i} \right) \left( b_{\mu+t/2-i-1} - b_{\mu-t/2+i} \right) \right] + b_{\mu+t/2}-b_{\mu-t/2-1}. \] \noindent Because $q$ is LD, every parenthesized term in this expression is non-negative. Moreover, the term corresponding to $i=\frac{t}{2}-1$ involves the strictly positive difference $b_{\mu}-b_{\mu-1}$. Therefore the sum must be strictly positive, and $\beta_{\mu+t/2-1}<\beta_{\mu+t/2}$. Similarly, \[ \beta_{\mu+t/2}-\beta_{\mu+t/2+1} = \left[ \displaystyle\sum_{i=0}^{t/2-1} \left( {t \choose i+1} - {t \choose i} \right) \left( b_{\mu-t/2+i+1}-b_{\mu+t/2-i} \right) \right] + b_{\mu-t/2} - b_{\mu+t/2+1}, \] \noindent in which, again, every parenthesized term is non-negative and the term corresponding to $i=\frac{t}{2}-1$ is strictly positive, showing that $\beta_{\mu+t/2+1}<\beta_{\mu+t/2}$ and establishing that $\mu+\frac{t}{2}$ is the unique mode of $(1+x)^t q(x)$.

Completely analogous computations show that $\beta_k < \beta_{k+1}$ whenever $k \in [t,\mu+t/2-2]$ and $\beta_k > \beta_{k+1}$ whenever $k \in [\mu+t/2+1,d-1]$. Together with the above inequalities, these inequalities establish the strict unimodality of $(1+x)^t q(x)$. 

To establish weak left dominance, we must consider the differences $\beta_{\mu+t/2-s}-\beta_{\mu+t/2+s}$ for $s \leq \mu+\frac{t}{2}$. Let us first assume that $\frac{t}{2} \leq s \leq \mu-\frac{t}{2}$. Expanding in a similar manner as above, we obtain \[ \beta_{\mu+t/2-s}-\beta_{\mu+t/2+s} = \displaystyle\sum_{i=0}^t {t \choose i} \left( b_{\mu-(s+t/2-i)} - b_{\mu+(s+t/2-i)} \right). \] \noindent Since $q$ is weakly LD, every parenthesized term in this expression is non-negative, establishing $\beta_{\mu+t/2+s} \leq \beta_{\mu+t/2-s}$. If $s<t/2$, a different arrangement of the terms in the difference gives \[ \begin{array}{rcl} \beta_{\mu+t/2-s}-\beta_{\mu+t/2+s} & = & \displaystyle\sum_{i=s}^{t/2-1} \left( {t \choose i+s} - {t \choose i-s} \right) \left( b_{\mu-(t/2-i)}-b_{\mu+(t/2-i)} \right) \\ & + & \displaystyle\sum_{i=0}^{2s-1} {t \choose i} \left( b_{\mu-(t/2+s+i)}-b_{\mu+(t/2+s+i)} \right), \\ \end{array} \] \noindent where once more every parenthesized term is non-negative, establishing $\beta_{\mu+t/2+s} \leq \beta_{\mu+t/2-s}$ in this case as well. Finally, consider $s$ such that $\mu-\frac{t}{2}+1 \leq s \leq \mu+\frac{t}{2}$. Because $(1+x)^t q(x)$ is now known to be balanced, establishing weak left dominance for these values $s$ is equivalent to showing $\beta_{d+t-j} \leq \beta_j$ for $0 \leq j \leq t-1$. However, \[ \beta_j - \beta_{d+t-j} = \displaystyle\sum_{i=0}^j {t \choose i} (b_{j-i}-b_{d-(j-i)}), \] \noindent in which, yet again, every parenthesized term is non-negative because $q$ is itself balanced and weakly LD.

Very similar arguments show that $\beta_{\mu+t/2-(s+1)} \leq \beta_{\mu+t/2+s}$ for all $s$. Together these inequalities show that $(1+x)^tq(x)$ is weakly LD, as desired. Moreover, we note that were $q$ to be LD and not merely weakly LD, all of the above expressions would involve strictly positive terms and not merely non-negative ones, showing that $(1+x)^tq(x)$ would be LD as well. 

As mentioned above, the proofs in case $q$ is (weakly) RD, or in which $t$ is odd, are analogous.
\end{proof}

We note that the proof above can easily be modified to show that the lemma remains true when we replace $(1+x)^t$ with any even-degree polynomial $p(x) = \sum_{i=0}^t a_ix^i$ that is both unimodal and \textit{symmetric} (for which $a_j = a_{t-j}$ for any $j$, $0 \leq j \leq t$). Moreover, we may also show that if $q$ itself is symmetric and unimodal, then the polynomial $(1+x)^t q(x)$ is likewise symmetric and unimodal.

\bigskip

Similar techniques yield estimates for the differences $\beta_{k+1}-\beta_k$ and $\beta_k-\beta_{k+1}$ in terms of the differences $b_j-b_{j+1}$:

\begin{lemma} \label{boundOnDiffsEvenBalanced}
Suppose that $q(x) = \sum_{i=0}^d b_ix^i$ is a balanced strictly unimodal polynomial with mode $\mu$, and further that $q$ is either weakly LD or weakly RD. Let $t \in \mathbb{N}$ be such that $t \leq \mu$, and let $(1+x)^t q(x) = \sum_{i=0}^{d+t} \beta_ix^i$ and $\nu = \mu\bigl((1+x)^tq(x)\bigr) \in \{ \mu+\lfloor \frac{t}{2} \rfloor, \mu+\lceil \frac{t}{2} \rceil\}$.

\begin{enumerate}
\item Let $k \in [\mu+1,\nu-1]$. Then $\beta_{k+1}-\beta_k \geq \left( {t \choose k-j+1 } - {t \choose k-j} \right) (b_j-b_{j+1})$ for $j \in \left[ k-\lceil \frac{t}{2} \rceil+1,k \right]$ and $\beta_{k+1}-\beta_k \geq b_{k+1}-b_{k+2}$.

\item Let $k \in \left[ \nu,d-1 \right]$. Then $\beta_k-\beta_{k+1} \geq \left( {t \choose k-j} - {t \choose k-j-1} \right) (b_j-b_{j+1})$ for $j \in \left[ k-\lfloor \frac{t}{2} \rfloor,k-1 \right]$ and $\beta_k-\beta_{k+1} \geq b_k-b_{k+1}$.
\end{enumerate}
\end{lemma}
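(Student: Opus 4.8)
The plan is to proceed exactly as in the proof of Lemma~\ref{alternatingSUmodesEvenBalanced}, starting from the convolution formula $\beta_k = \sum_{i=0}^t \binom{t}{i} b_{k-i}$ (valid for $t \le k \le d$) and analyzing the consecutive difference directly. I will carry out part~(1) in detail; part~(2) then follows by the mirror-image argument, reflecting indices about the mode and interchanging the roles of weak LD and weak RD. Writing $\beta_{k+1}-\beta_k = \sum_{i=-1}^{t}\bigl(\binom{t}{i+1}-\binom{t}{i}\bigr) b_{k-i}$ and exploiting the antisymmetry $\binom{t}{i+1}-\binom{t}{i} = -\bigl(\binom{t}{t-i}-\binom{t}{t-1-i}\bigr)$ of these coefficients about $i=\frac{t-1}{2}$, I would pair the index $i$ with $t-1-i$ to obtain the purely algebraic identity
\[ \beta_{k+1}-\beta_k \;=\; (b_{k+1}-b_{k-t}) \;+\; \sum_{i=0}^{\lceil t/2\rceil-1}\Bigl(\binom{t}{i+1}-\binom{t}{i}\Bigr)\bigl(b_{k-i}-b_{k-t+1+i}\bigr), \]
which is precisely the regrouping already used to locate the mode in Lemma~\ref{alternatingSUmodesEvenBalanced} (the case $k=\mu+\frac{t}{2}-1$); for odd $t$ the central summand simply vanishes.

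The next step is to show that, for $k$ in the stated range, every summand on the right-hand side is non-negative. Each paired difference $b_{k-i}-b_{k-t+1+i}$ involves two indices symmetric about $k-\frac{t-1}{2}$, and the hypothesis $k \le \nu-1 \le \mu+\lceil t/2\rceil-1$ forces this center to lie weakly on the dominant side of $\mu$. Consequently the two indices straddle the mode in such a way that the relevant weak-dominance chain gives $b_{k-i} \ge b_{k-t+1+i}$. Since the binomial increments $\binom{t}{i+1}-\binom{t}{i}$ are non-negative for $i \le \frac{t}{2}-1$, every summand is non-negative, and hence $\beta_{k+1}-\beta_k$ dominates any single one of them.

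From here the two asserted bounds fall out. Retaining only the summand with $i = k-j$ (legitimate because $j \in [k-\lceil t/2\rceil+1,k]$ places $k-j$ in $[0,\lceil t/2\rceil-1]$) yields $\beta_{k+1}-\beta_k \ge \bigl(\binom{t}{k-j+1}-\binom{t}{k-j}\bigr)\bigl(b_j - b_{2k-t+1-j}\bigr)$; one further application of weak dominance gives $b_{2k-t+1-j} \le b_{j+1}$, which widens the subtracted term and produces the first inequality. Retaining instead the leftover term $b_{k+1}-b_{k-t}$ and replacing $b_{k-t}$ by the nearer value $b_{k+2}$ in the same way yields $\beta_{k+1}-\beta_k \ge b_{k+1}-b_{k+2}$.

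The main obstacle is the sign bookkeeping in the middle step: one must verify, index by index, that each paired difference carries the correct sign and that the two gap-widening replacements $b_{2k-t+1-j}\le b_{j+1}$ and $b_{k-t}\le b_{k+2}$ hold. Both facts depend on locating the indices precisely relative to $\mu$ and invoking the exact weak-LD or weak-RD ordering in force; this is exactly where the constraint on $k$ and the dominance hypothesis are used, and it is also where the argument bifurcates according to the parity of $t$ and according to whether $q$ is LD or RD (with part~(2) obtained as the reflection of part~(1)). A secondary nuisance is the treatment of boundary values of $k$ near $t$ or near $d$, where the convolution formula truncates and a few summands must be inspected by hand.
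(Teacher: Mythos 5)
Your proposal is correct and takes essentially the same route as the paper's own proof: the same pairing/regrouping identity for $\beta_{k+1}-\beta_k$ (the one already used to locate the mode in Lemma~\ref{alternatingSUmodesEvenBalanced}), followed by isolating the single summand $i=k-j$ (respectively the lone term $b_{k+1}-b_{k-t}$ outside the sum) and widening the subtracted index via the weak-dominance chain, with Case~(2) obtained from the mirrored identity. The only differences are cosmetic refinements — your explicit handling of odd $t$ (vanishing central summand) and of truncation near the ends of the coefficient range, which the paper glosses over.
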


\begin{proof}
Let us consider Case (1) when $q$ is weakly LD. Then \[ \beta_{k+1}-\beta_k = \left[ \displaystyle\sum_{i=0}^{t/2-1} \left( {t \choose i+1} - {t \choose i} \right) (b_{k-i}-b_{k-t+i+1}) \right] + b_{k+1}-b_{k-t}. \] \noindent Isolating the $i$th term in the sum gives \[ \beta_{k+1}-\beta_k \geq \left( {t \choose i+1} - {t \choose i} \right) (b_{k-i}-b_{k-t+i+1}) \geq \left( {t \choose i+1} - {t \choose i} \right) (b_{k-i}-b_{k-i+1}), \] \noindent and letting $j=k-i$ gives the desired inequality. The inequality $\beta_{k+1}-\beta_k \geq b_{k+1}-b_{k+2}$ follows from isolating the lone term outside of the sum. Proving Case (1) for $q$ weakly RD is analogous.

Case (2) is proven in similar fashion, using the fact that \[ \beta_k-\beta_{k+1} = \left[ \displaystyle\sum_{i=0}^{t/2-1} \left( {t \choose i+1} - {t \choose i} \right) (b_{k-t+i+1}- b_{k-i}) \right] + b_{k-t}-b_{k+1}. \]
\end{proof}

We note that in general, the bounds given in Lemma~\ref{boundOnDiffsEvenBalanced} will offer very coarse estimates, given both the number of terms ignored in the proof above and the enormity of the coefficients on those terms. However, these bounds are sufficiently tight for our purposes, as we shall see in the next section.

\section{Applying the lemmas: unimodality of non-regular caterpillars} \label{secProofMain}

\noindent Recall that if $G$ is a graph and $S \subseteq V(G)$, then $G-S$ is defined to be the graph resulting from $G$ by removing all vertices in $S$ and all edges incident to at least one vertex in $S$. If $S = \{v\}$ comprises a single vertex, we may write $G-v$ for $G-\{v\}$. If $v \in V(G)$, then the \textit{closed neighborhood} of $v$, $N[v]$, is defined by $N[v] = \{u \in V(G) \ | \ uv \in E(V)\} \cup \{v\}$.

We first note a standard lemma that we will use frequently, often without explicit mention. Its proof is well-known and straightforward.

\begin{lemma} \label{recursiveIndPoly}
Suppose that $G$ is a graph and $v \in V(G)$. Then $I(G,x) = I(G-v,x)+xI(G-N[v],x)$.
\end{lemma}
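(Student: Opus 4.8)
The plan is to prove this standard deletion recurrence by a direct bijective partition of the independent sets of $G$ according to whether or not they contain the distinguished vertex $v$. This is the cleanest route because the statement is an identity of generating polynomials whose coefficients count independent sets by cardinality, so it suffices to establish the corresponding identity at the level of the underlying set families and then pass to polynomials.

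\begin{proof}
Let $v \in V(G)$ and partition the collection $\mathcal{I}(G)$ of all independent sets of $G$ into two classes: those independent sets $S$ with $v \notin S$, and those with $v \in S$. First I would observe that the independent sets $S \subseteq V(G)$ with $v \notin S$ are precisely the independent sets of $G-v$: removing $v$ (and its incident edges) does not affect whether a subset avoiding $v$ is independent, and conversely every independent set of $G-v$ is an independent set of $G$ not containing $v$. Hence this class contributes exactly $I(G-v,x)$ to the generating polynomial, since cardinalities are preserved.

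Next I would analyze the independent sets $S$ with $v \in S$. Because $S$ is independent, $S$ can contain no neighbor of $v$; thus $S \setminus \{v\} \subseteq V(G) \setminus N[v]$. Conversely, if $T$ is any independent set of $G - N[v]$, then $T \cup \{v\}$ is an independent set of $G$ containing $v$, as $v$ is adjacent to no vertex of $T$ (every neighbor of $v$ having been removed) and $T$ itself remains independent in $G$. The assignment $T \mapsto T \cup \{v\}$ is therefore a bijection between the independent sets of $G - N[v]$ and the independent sets of $G$ containing $v$, and it increases cardinality by exactly one. At the level of generating polynomials this bijection multiplies the count by $x$, so this class contributes $x\, I(G-N[v],x)$.

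Finally I would combine the two contributions. Since every independent set of $G$ falls into exactly one of the two classes, summing $x^{|S|}$ over all $S \in \mathcal{I}(G)$ gives
\[ I(G,x) = I(G-v,x) + x\, I(G-N[v],x), \]
as claimed.
\end{proof}

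There is no real obstacle here: the only point requiring care is verifying that the two classes are exhausted and disjoint and that the bijection in the second class shifts degree by exactly one, both of which follow immediately from the definition of independence and the closed neighborhood $N[v]$.
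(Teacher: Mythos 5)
Your proof is correct and complete: the partition of independent sets according to whether they contain $v$, together with the cardinality-shifting bijection $T \mapsto T \cup \{v\}$ for the second class, is exactly the standard argument. The paper itself states this lemma without proof, calling it ``well-known and straightforward,'' and yours is precisely that well-known proof.
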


Let us now recall the sequences $\vec{m} = (m_1,m_2,...)$ and corresponding caterpillars $T(\vec{m},n)$ defined in the introduction. We let $p_{\vec{m},n}(x) = I(T(\vec{m},n),x)$, and if $k(\vec{m},n)$ denotes the greatest power of $1+x$ evenly dividing $p_{\vec{m},n}$, we define $q_{\vec{m},n}$ by $p_{\vec{m},n} = (1+x)^{k(\vec{m},n)}q_{\vec{m},n}$. When $\vec{m}$ is understood, we may abbreviate $T(\vec{m},n)$ to $T_n$, \textit{etc.}

The following facts are proven by direct application of Lemma~\ref{recursiveIndPoly} (to the spine vertex $v$ with $m_n$ pendant edges) and straightforward inductions; compare the methods of [2--4], for instance:

\begin{proposition} \label{basicsOfMCs}
Let $\vec{m}$ be given a non-decreasing sequence of natural numbers. Then \[ p_n = \left\{ \begin{array}{ll} (1+x)^{m_1}+x & {\rm if} \ n = 1, \\ (1+x)^{m_1+m_2}+x\bigl((1+x)^{m_1}+(1+x)^{m_2}\bigr) & {\rm if} \ n=2, \ {\rm and} \\ (1+x)^{m_n}p_{n-1}(x) + x(1+x)^{m_{n-1}}p_{n-2}(x) & {\rm if} \ n \geq 3. \\ \end{array} \right. \] \noindent For all $n$, $\deg(p_n) = \sum_{i=1}^n m_i$. Moreover, $k_1=0$ and \[ k_n = \left\{ \begin{array}{ll} m_1+m_3+ \cdots + m_{n-1} & {\rm if} \ n \ {\rm is \ even \ and} \\ m_2+m_4+\cdots+m_{n-1} & {\rm if} \ n \ {\rm is \ odd,} \ n \geq 3,\end{array} \right. \] \noindent so that $k_{n+1} \geq k_n$ for all $n$, and \[ q_n = \left\{ \begin{array}{ll} (1+x)^{m_1}+x & {\rm if} \ n=1, \\ (1+x)^{m_2}+x(1+x)^{m_2-m_1}+x & {\rm if} \ n=2, \ {\rm and} \\ (1+x)^{k_{n+1}-k_n}q_{n-1}(x)+xq_{n-2}(x) & {\rm if} \ n \geq 3. \\ \end{array} \right. \]
\noindent Thus, for all $n$, \[ \deg(q_n) = \left\{ \begin{array}{ll} m_1+m_3+\cdots+m_n & {\rm if} \ n \ {\rm is \ odd \ and} \\ m_2+m_4+\cdots+m_n & {\rm if} \ n \ {\rm is \ even.} \\ \end{array} \right. \]
\end{proposition}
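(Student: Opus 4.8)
The plan is to run everything off Lemma~\ref{recursiveIndPoly} applied to the terminal spine vertex, together with induction on $n$. Label the spine vertices $v_1,\ldots,v_n$, with $v_k$ carrying its $m_k$ pendant leaves, and apply Lemma~\ref{recursiveIndPoly} at $v=v_n$. Deleting $v_n$ leaves $T_{n-1}$ together with the $m_n$ now-isolated leaves of $v_n$, while deleting $N[v_n]$ (which also removes $v_{n-1}$) leaves $T_{n-2}$ together with the $m_{n-1}$ now-isolated leaves of $v_{n-1}$. Since the independence polynomial of a disjoint union is the product of the factors' polynomials and $t$ isolated vertices contribute $(1+x)^t$, this gives $I(T_n-v_n,x)=(1+x)^{m_n}p_{n-1}$ and $I(T_n-N[v_n],x)=(1+x)^{m_{n-1}}p_{n-2}$, hence the stated recursion for $n\geq 3$; the $n=1,2$ formulas follow from the same computation (with the convention $p_0=1$ for the empty graph). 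The degree claim $\deg(p_n)=\sum_{i=1}^n m_i$ then follows by a one-line induction: the first summand has degree $m_n+\deg(p_{n-1})=\sum_{i=1}^n m_i$, at least as large as the degree $1+m_{n-1}+\deg(p_{n-2})$ of the second, and its top coefficient survives because all coefficients of $p_n$ are nonnegative.

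Next I would obtain the $k_n$ formula and the $q_n$ recursion simultaneously. Substituting $p_{n-1}=(1+x)^{k_{n-1}}q_{n-1}$ and $p_{n-2}=(1+x)^{k_{n-2}}q_{n-2}$ turns the recursion into $p_n=(1+x)^{m_n+k_{n-1}}q_{n-1}+x(1+x)^{m_{n-1}+k_{n-2}}q_{n-2}$. Inserting the inductive expressions for $k_{n-1}$ and $k_{n-2}$, a short index-bookkeeping computation (split into $n$ even and $n$ odd) shows that $m_{n-1}+k_{n-2}$ equals the claimed value of $k_n$ while $m_n+k_{n-1}$ equals the claimed value of $k_{n+1}$; moreover $k_{n+1}\geq k_n$ because $\vec{m}$ is non-decreasing. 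Factoring out $(1+x)^{k_n}$ then yields at once that $(1+x)^{k_n}\mid p_n$ and the recursion $q_n=(1+x)^{k_{n+1}-k_n}q_{n-1}+xq_{n-2}$ (the $n=1,2$ base cases being direct computations). Finally $\deg(q_n)=\deg(p_n)-k_n$, which the same bookkeeping evaluates to the stated alternating sums.

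The one genuinely delicate point, and the step I expect to be the main obstacle, is showing that $k_n$ is \emph{exactly} (not merely at least) the stated exponent, i.e.\ that $(1+x)\nmid q_n$, equivalently $c_n:=q_n(-1)\neq 0$. Evaluating the $q_n$ recursion at $x=-1$ gives $c_n=-c_{n-2}$ when $k_{n+1}>k_n$ and $c_n=c_{n-1}-c_{n-2}$ when $k_{n+1}=k_n$. A preliminary computation shows $k_{n+1}-k_n=m_1+(m_3-m_2)+\cdots+(m_n-m_{n-1})\geq m_1\geq 1$ for $n$ odd, so the troublesome equality case can occur only for $n$ even. I would then prove by induction that $c_{2m-1}=(-1)^m$ and that $c_{2m}$ has sign $(-1)^m$ with $|c_{2m}|\geq 1$; the crux is that in the equality case $c_{2m+2}=c_{2m+1}-c_{2m}$ the two terms $c_{2m+1}=(-1)^{m+1}$ and $-c_{2m}$ carry the \emph{same} sign, so no cancellation can occur and $c_{2m+2}\neq 0$. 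This rules out the loss of divisibility precisely where it might otherwise have happened and completes the proof.
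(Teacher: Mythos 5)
Your proof is correct and is essentially the paper's own approach: the paper establishes this proposition precisely by applying Lemma~\ref{recursiveIndPoly} at the spine vertex with $m_n$ pendant edges and then running inductions it calls ``straightforward'' without writing them out. Your sign-tracking argument at $x=-1$, showing that $1+x$ does not divide $q_n$ so that $k_n$ is \emph{exactly} the stated exponent, carefully supplies the one detail the paper leaves implicit.
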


We now apply Lemmas~\ref{alternatingSUmodesEvenBalanced} and \ref{boundOnDiffsEvenBalanced} to show that under the right hypotheses $q_n(x)$ is balanced, strictly unimodal, and either LD or RD. Once this is done, one more application of Lemma~\ref{alternatingSUmodesEvenBalanced} will establish the same properties for $p_n(x) = I(T(\vec{m},n),x)$, thereby proving our main result, Theorem~\ref{mainTheorem}.

\begin{proposition} \label{mainTheoremRedux}
Let $\vec{m} = (m_1,m_2,...)$ be a sequence of natural numbers such that
\begin{enumerate}
\item $m_k \leq m_{k+1}$ for all $1 \leq k < n$,
\item $3 \leq m_1 < m_2$ and $m_3 < m_4$, and
\item $2(m_1+m_3+\cdots+m_k) < 3(m_2+m_4+\cdots+m_{k-1})$ for $k$ odd and $2(m_2+m_4+\cdots+m_k) < 3(m_1+m_3+\cdots+m_{k-1})$ for $k$ even.
\end{enumerate}
Then the polynomial $q_n(x)$ defined as above is balanced, strictly unimodal, and either LD or RD.
\end{proposition}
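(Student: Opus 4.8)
The plan is to induct on $n$, driving the argument with the recursion $q_n = (1+x)^{t_n}q_{n-1} + xq_{n-2}$ from Proposition~\ref{basicsOfMCs}, where I write $t_n = k_{n+1}-k_n$. For the base cases I would verify directly from the explicit formulas that $q_1$, $q_2$, and the first few $q_n$ (as many as are needed before the general estimates take hold) are balanced, strictly unimodal, and LD or RD. Here Condition~(2) does the work: $m_1 \ge 3$ together with the strict inequalities $m_1 < m_2$ and $m_3 < m_4$ forces these small polynomials to be strictly—not merely weakly—LD or RD and pins their modes to $\lfloor d/2 \rfloor$ or $\lceil d/2 \rceil$.

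Before the inductive step I would unpack Condition~(3): rewritten in terms of the partial sums $m_1+m_3+\cdots$ and $m_2+m_4+\cdots$, it is equivalent to the bound $t_n \le \mu_{n-1}$, which is exactly the hypothesis required to apply Lemmas~\ref{alternatingSUmodesEvenBalanced} and \ref{boundOnDiffsEvenBalanced} with $q=q_{n-1}$; it also keeps the mode central, which is what will protect balance. Assuming inductively that $q_{n-1}$ and $q_{n-2}$ are balanced, strictly unimodal, and LD or RD, Lemma~\ref{alternatingSUmodesEvenBalanced} then yields at once that $A := (1+x)^{t_n}q_{n-1}$ is balanced, strictly unimodal, and LD or RD, with type flipped from that of $q_{n-1}$ exactly when $t_n$ is odd and with mode $\mu_{n-1}+\lfloor t_n/2\rfloor$ or $\mu_{n-1}+\lceil t_n/2\rceil$. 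Everything then reduces to showing that the perturbation $B := xq_{n-2}$ preserves these three properties and, in particular, the type of $A$.

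The heart of the proof is the interaction between $A$ and $B$. Since $\deg B = 1 + \deg q_{n-2}$ is far below $\deg A = d_n$, a degree computation places the bulk of $B$ strictly to the left of the mode of $A$: $B$ shifts weight leftward, which reinforces left-dominance but works against right-dominance, so the genuinely delicate case is the one in which $A$ is RD. In the band running from the mode of $B$ up to the mode of $A$, the coefficients of $A$ are strictly increasing while the shifted right tail of $q_{n-2}$ inside $B$ is decreasing, and I must show that the coefficient gaps of $A$ strictly dominate those of $B$. This guarantees at one stroke that $q_n = A + B$ is strictly increasing up to the mode of $A$, that this mode (and hence balance) is unchanged, and—comparing the symmetric pairs $c_{\nu+s}$ and $c_{\nu-s}$—that right-dominance survives the extra left weight; to the right of the mode the contributions of $B$ are only the tiny far-right coefficients of $q_{n-2}$, easily absorbed by the large descending gaps of $A$. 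The tool for all of these is Lemma~\ref{boundOnDiffsEvenBalanced}, which bounds each gap of $A$ below by a binomially amplified gap of $q_{n-1}$.

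The main obstacle is converting these lower bounds into an actual domination of the gaps of $q_{n-2}$, since the two polynomials live in different degrees. My approach is to compare the gaps of $q_{n-1}$ with those of $q_{n-2}$ directly through the recursion $q_{n-1} = (1+x)^{t_{n-1}}q_{n-2} + xq_{n-3}$: a second application of Lemma~\ref{boundOnDiffsEvenBalanced} shows the gaps of $q_{n-1}$ already dominate binomial multiples of the gaps of $q_{n-2}$, and the large binomial factors $\binom{t_n}{\,\cdot\,}-\binom{t_n}{\,\cdot\,}$ produced when passing to $A$ supply the slack needed to swamp the single unshifted copy of $q_{n-2}$ in $B$. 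I expect the cleanest realization is to strengthen the inductive hypothesis to carry an explicit lower bound on the gaps $(q_{n-1})_k - (q_{n-1})_{k+1}$ in terms of those of $q_{n-2}$, and then to check that Condition~(3) makes $t_n$ large enough for the amplified estimate to win. Verifying that the boundary inequalities defining LD/RD survive—the end-of-sequence equalities permitted by balance and by the weak forms in Lemma~\ref{alternatingSUmodesEvenBalanced} being the only subtlety—then closes the induction.
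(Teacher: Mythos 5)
Your proposal follows essentially the same route as the paper: the same induction driven by $q_n = (1+x)^t q_{n-1} + xq_{n-2}$, with Lemma~\ref{alternatingSUmodesEvenBalanced} handling $(1+x)^t q_{n-1}$ (using Condition~(3) to guarantee $t \leq \mu(q_{n-1})$) and a chained double application of Lemma~\ref{boundOnDiffsEvenBalanced} --- once to $(1+x)^t q_{n-1}$ in terms of the gaps of $q_{n-1}$, and once to $q_{n-1}$ via its own recursion in terms of the gaps of $q_{n-2}$ --- to absorb the perturbation $xq_{n-2}$ on the band between the two modes. The only simplification you miss is that no strengthened inductive hypothesis is needed: the two binomial factors already give $\beta_{k+1}-\beta_k \geq (t-1)(t'-1)(a_{k-1}-a_k)$, and it is Conditions~(1)--(2) (not Condition~(3)) that supply $t,t' \geq 2$, so the crude bound $(t-1)(t'-1) \geq 1$ closes the argument.
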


\begin{proof}
Straightforward computation shows that both $q_1$ and $q_2$ are balanced, with $q_1$ weakly LD and $q_2$ either LD or RD. The polynomial $q_1$ is always strictly unimodal, and $q_2$ is unimodal in general. It will be strictly unimodal except when $m_1$ is even, $m_1 \geq 6$, and $m_2=m_1+1$; in this case, $q_2$ has consecutive modes at $\frac{m_2-1}{2}$ and $\frac{m_2+1}{2}$. In any case, we may take these polynomials to be the base cases for an induction. Assume that we have shown our result for all $k \leq n-1$ for some $n \geq 3$.

Let $\mu = \mu(q_{n-2})$ and $\mu' = \mu(q_{n-1})$, and suppose \[ q_{n-2} = \sum a_ix^i, q_{n-1} = \sum b_ix^i, xq_{n-2} = \sum \alpha_ix^i, \ {\rm and} \ (1+x)^t q_{n-1} = \sum \beta_ix^i, \]  \noindent where \[ t = k_{n+1}-k_n = \left\{ \begin{array}{ll} (m_2-m_1) + \cdots + (m_n-m_{n-1}) & {\rm if} \ n \ {\rm is \ even \ and} \\ m_1 + (m_3-m_2) + \cdots + (m_n-m_{n-1}) & {\rm if} \ n \ {\rm is \ odd.} \\ \end{array} \right. \] \noindent Clearly, $\alpha_i = a_{i-1}$ for all $i \geq 1$, $\mu(xq_{n-2}) = \mu+1$, and Lemma~\ref{alternatingSUmodesEvenBalanced} implies that $(1+x)^tq_{n-1}$ is balaced and unimodal and either LD or RD. Moreover, if we let $\mu'' = \mu((1+x)^tq_{n-1})$, then $\mu'' = \mu'+\lfloor \frac{t}{2} \rfloor$ if $q_{n-1}$ is LD and $\mu'' = \mu'+\lceil \frac{t}{2} \rceil$ if $q_{n-1}$ is RD. (Note that Lemma~\ref{alternatingSUmodesEvenBalanced} applies because our third hypothesis on the values $m_i$ ensures that $t \leq \mu'$.) Finally, because $q_n = (1+x)^tq_{n-1}+xq_{n-2}$, $q_n = \sum (\alpha_i+\beta_i)x^i = \sum (a_{i-1}+\beta_i)x^i$.

Clearly $\alpha_k+\beta_k < \alpha_{k+1}+\beta_{k+1}$ for $k \leq \mu$ and $\alpha_k+\beta_k > \alpha_{k+1}+\beta_{k+1}$ for $k \geq \mu''$. Thus, to prove strict unimodality of $q_n$ we only need to consider $k \in \left[ \mu+1,\mu''-1 \right]$, and for such $k$ it suffices to show that $a_{k-1}-a_k < \beta_{k+1}-\beta_k$. First note that, by the definition of $b_j$ and $\beta_k$, Part (1) of Lemma~\ref{boundOnDiffsEvenBalanced} implies $\beta_{k+1}-\beta_k \geq (t-1)(b_k-b_{k+1})$. Moreover, because $q_{n-1} = (1+x)^{k_n-k_{n-1}}q_{n-2} + xq_{n-3}$, Part (2) of the same lemma implies $b_k-b_{k+1} \geq (t'-1)(a_{k-1}-a_k)$, where $t'=k_n-k_{n-1}$. (In fact, this last estimate ignores the non-negative contribution from the term $xq_{n-3}$, inclusion of which would only serve to increase the left-hand side of the last inequality.) Putting these together, we obtain $\beta_{k+1}-\beta_k \geq (t-1)(t'-1)(a_{k-1}-a_k)$ Moreover, our first two hypotheses on the values $m_i$ imply that $t,t' \geq 2$ whenever $n \geq 3$. Thus $(t-1)(t'-1) \geq 1$, and the inequality above implies that $\beta_{k+1}-\beta_k \geq a_{k-1}-a_k$, as desired.

Thus $q_n$ is strictly unimodal and \[ \mu(q_n) = \mu((1+x)^tq_{n-1}) = \left\{ \begin{array}{ll} \mu(q_{n-1})+\lfloor \frac{t}{2} \rfloor & {\rm if} \ q_n \ {\rm is \ RD \ and} \\ \mu(q_{n-1})+\lceil \frac{t}{2} \rceil & {\rm if} \ q_n \ {\rm is \ LD}, \\ \end{array} \right. \] \noindent so that $q_n$ is balanced as well. Moreover, Lemma~\ref{boundOnDiffsEvenBalanced} also implies that the addition of the terms $\alpha_k$ does not affect the LD or RD nature of $(1+x)^tq_{n-1}$. That is, $q_n = (1+x)^tq_{n-1}+xq_{n-2}$ remains LD or RD, accordingly.
\end{proof}

As noted before the statement of the proposition, the fact that $p_n(x) = I(T(\vec{m},n),x)$ is balanced, strictly unimodal (even in the case of $p_2$ when $m_1 \geq 6$ is even and $m_2=m_1+1$), and either LD or RD follows from one more application of Lemma~\ref{alternatingSUmodesEvenBalanced}, because $p_n(x) = (1+x)^{k_n}q_n(x)$ for all $n$. This completes our proof of Theorem~\ref{mainTheorem}.

We close by offering an explanation for our requirement that $\vec{m}$ be non-decreasing. Observe that for $n \geq 3$, $k_n = \min\{k_{n-1}+m_n,k_{n-2}+m_{n-1}\}$, so that if $\vec{m}$ were not non-decreasing, it could be that, for some $n$, $k_{n-1}+m_n<k_{n-2}+m_{n-1}$, giving us \[ q_n = q_{n-1}+x(1+x)^{k_{n-2}+m_{n-1}-k_n}q_{n-2}, \] \noindent in which case our fundamental result, Lemma~\ref{alternatingSUmodesEvenBalanced}, would not apply. Therefore without significant further analysis of the recursive construction of an arbitrary caterpillar's independence polynomial, we cannot accommodate sequences $\vec{m}$ in which $m_k > m_{k+1}$ for some $k$.

\end{document}